\begin{document}



\section{Introduction}
\numberwithin{equation}{section}
\subsection{Definitions and main result}
The random cluster-model (also called FK-percolation) was introduced by Fortuin and Kastelyn in 1969 \cite{FK} and has become a fundamental example of dependent percolation, in particular because of its relation to the Potts model. Indeed, the spin correlations of Potts models can be linked to the cluster connectivity properties of their random-cluster representations. This allows the use of geometric techniques developed for percolation to study the Potts model. We refer to \cite{duminilpims,GRIM2} for books on the subject and a recent discussion of existing results.

The model is defined as follows. For a finite subgraph $\Lambda$ of $\mathbb{Z}^{d}$, a percolation configuration $\omega=(\omega)_{x,y\in\Lambda}$ is an element of $\lbrace 0,1\rbrace^{\mathcal{P}_{2}(\Lambda)}$, where $\mathcal{P}_{2}(\Lambda)=\lbrace \lbrace x,y\rbrace : x,y\in\Lambda, x\neq y\rbrace$. A configuration $\omega$ can be seen as a subgraph of $\Lambda$ with vertex-set $\Lambda$ and edge-set given by $\lbrace \lbrace x,y\rbrace\in\mathcal{P}_{2}(\Lambda) : \omega_{x,y}=1\rbrace$. If $\omega_{x,y}=1$, we say that $\lbrace x,y\rbrace$ is open. Let $k(\omega)$ be the number of connected components in $\omega$. 

Consider $J=(J_{x,y})_{x,y\in\Lambda}$ non-negative coupling constants. 
Fix $\beta,q>0$. Let $\mu_{\Lambda,\beta,q}$ be a measure defined for any $\omega\in\lbrace 0,1\rbrace^{\mathcal{P}_{2}(\Lambda)}$ by 
\begin{center}
$\mu_{\Lambda,\beta,q}(\omega)=\dfrac{q^{k(\omega)}}{Z}\prod\limits_{\lbrace x,y\rbrace\in\mathcal{P}_{2}(\Lambda)}(1-e^{-\beta J_{x,y}})^{\omega_{x,y}}$,
\end{center}
where $Z$ is a normalizing constant introduced in such a way that $\mu_{\Lambda,\beta,q}$ is a probability measure. The measure $\mu_{\Lambda,\beta,q}$ is called the random-cluster measure on $\Lambda$ with free boundary conditions. For $q\geq 1$, the measures can be extented to $\mathbb{Z}^{d}$ by taking the weak limit of measures defined in finite volume. 

We say that $x$ and $y$ are connected in $S\subseteq \mathbb{Z}^{d}$ if there exists a finite sequence of vertices $(v_i)_{i=0}^{n}$ in $S$ such that $v_0 =x$, $v_n =y$ and $ \lbrace v_{i} ,v_{i+1}\rbrace$ is \textit{open} for every $0\leq i<n$. We denote this event by $x \overset{S}{\leftrightarrow} y$. If $S=\mathbb{Z}^{d}$, we drop it from the notation. We write $0\leftrightarrow\infty$ if for every $n\in\mathbb{N}$, there exists $x\in\mathbb{Z}^{d}$ such that $0\leftrightarrow x$ and $\vert x\vert\geq n$, where $\vert \cdot\vert$ denotes a norm on $\mathbb{Z}^{d}$.

For $q\geq 1$, the model undergoes a phase transition: there exists $\beta_{c}\in[0,\infty]$ satisfying
\begin{center}
$\mu_{\mathbb{Z}^{d},\beta,q}(0\leftrightarrow \infty)=
\begin{cases} 
=0 & \text{if } \beta<\beta_{c}, \\
>0 &  \text{if } \beta>\beta_{c}. \\
\end{cases}
$
\end{center}
For $\beta<\beta_{c}$, it follows from the definition that $\mu_{\mathbb{Z}^{d},\beta,q}(0\leftrightarrow x)$ goes to $0$ as $\vert x\vert$ goes to infinity. In \cite{duminilcopinOSSS}, it was proved that if the coupling constants are finite-range, meaning that there exists $R>0$ such that $J_{x,y}=0$ whenever $\vert x-y\vert >R$, then the probability of two points being connected decays exponentially fast in distance, i.e. for every $\beta <\beta_{c}$, there exists $c(\beta)>0$ such that for every $x$ in $\mathbb{Z}^{d}$, 
\begin{equation}
\mu_{\mathbb{Z}^{d},\beta ,q}(0\leftrightarrow x)\leq \exp(-c\vert x\vert).
\end{equation}

In this article, we consider the random-cluster models with strictly positive infinite-range coupling constants $(J_{x,y})_{x,y\in\mathbb{Z}^{d}}$ satisfying for every $x,y,z\in\mathbb{Z}^{d}$ 
\begin{enumerate}[label={\ensuremath{\mathbf{H\arabic*}}}, start=1,noitemsep]
\item \label{hyp:1}
There exists $c>0$ such that $J_{0,x}\leq cJ_{0,y}$ if $\vert x\vert \geq\vert y\vert$.
\item  \label{hyp:2}
$J_{x-z,y-z}=J_{x,y}$.
\item  \label{hyp:3}
$\sum\limits_{y\in\mathbb{Z}^{d}}J_{0,y}<\infty$
\item  \label{hyp:4}
For every $x\in\mathbb{Z}^{d}$, for every $\varepsilon>0$, there exists $\delta>0$ such that for every $y\in\mathbb{Z}^{d}$
\begin{center}
$\vert x-y\vert\leq\delta\vert x\vert\qquad \Rightarrow \qquad \vert J_{0,x}-J_{0,y}\vert\leq\varepsilon J_{0,x}$.
\end{center}
\item \label{hyp:5}
There exist $0<\gamma <1, 0<\alpha<1$ and $C_{1}>0$ such that $\sum\limits_{y\in\mathbb{Z}^{d}}(J_{0,y})^{\alpha}<\infty$ and such that for every $x\in\mathbb{Z}^{d}$
\begin{center}
$\log(J_{0,x})^{2}J_{0,u}J_{0,v}\leq C_{1}J_{0,x}(J_{0,v})^{\alpha},$
\end{center}
with $\vert u\vert\geq \vert x\vert /\log(J_{0,x})^{2}$ and $\vert v\vert\geq \vert x\vert^{\gamma}/\log(J_{0,x})^{2}$.
\end{enumerate}
\begin{remark}
The hypothesis~\ref{hyp:5} is a technical one and its meaning will become transparent at the end of the proof of Lemma~\ref{lemma:negligible_terms}.
\end{remark}
\begin{remark}
Important examples of coupling constants satisfying~\ref{hyp:1}-~\ref{hyp:5} are $J_{0,x}=\vert x\vert^{-c}$ with $c>d$, $J_{0,x}=\vert x\vert^{-\log\vert x\vert}$ or more generally $J_{0,x}=e^{-C\log(p(\vert x\vert))^{\gamma}}$ for some polynomial $p\in\mathbb{R}[x]$ of degree at least 1 and $C,\gamma>0$ chosen such that~\ref{hyp:3} holds.
\end{remark}
\begin{remark}\label{Remark:Stretched expo}
The hypothesis~\ref{hyp:5} rules out the stretched exponential decay, i.e. $J_{0,x}=\exp(-\vert x\vert^{\eta})$ with $\eta\in (0,1)$. This implies in particular that 
\begin{equation*}
\lim\limits_{\vert x\vert\rightarrow\infty}\dfrac{\vert x\vert^{\xi}}{-\log(J_{0,x})}=\infty
\end{equation*}
for every $\xi\in (0,1)$.
\end{remark}
We write $o_{x}(1)$ for a function that goes to 0 as $\vert x\vert$ goes to infinity. The main theorem of this article is the following one.
\begin{theorem}\label{thm:main}
If $(J_{x,y})_{x,y\in\mathbb{Z}^{d}}$ satifies~\ref{hyp:1}-~\ref{hyp:5} then for $q\geq 1,\beta<\beta_{c}$ and for every $x\in\mathbb{Z}^{d}$,  
\begin{equation}
\mu_{\mathbb{Z}^{d},\beta,q}(0\leftrightarrow x)=\dfrac{\beta\chi(\beta)^{2}}{q} J_{0,x}(1+o_{x}(1)),
\end{equation}
where $\chi(\beta):=\sum\limits_{x\in\mathbb{Z}^{d}}\mu_{\mathbb{Z}^{d},\beta,q}(0\leftrightarrow x)$.
\end{theorem}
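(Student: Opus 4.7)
The plan is to sandwich $\mu(0\leftrightarrow x)$ between two copies of
\[
\Sigma(x):=\frac{\beta}{q}\sum_{u,v\in\mathbb{Z}^d}\mu(0\leftrightarrow u)\,J_{u,v}\,\mu(v\leftrightarrow x)
\]
up to a multiplicative error $(1+o_x(1))$, and then to establish $\Sigma(x)=\chi(\beta)^{2}J_{0,x}(1+o_x(1))$ using~\ref{hyp:1}--\ref{hyp:5}.

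\textbf{Lower bound.} For each pair $(u,v)$, the event $A_{u,v}:=\{0\leftrightarrow u\}\cap\{\omega_{u,v}=1\}\cap\{v\leftrightarrow x\}$ is increasing. By FKG (applicable because $q\ge 1$) combined with the random-cluster finite-energy inequality $\mu(\omega_{u,v}=1\mid\text{other edges})\ge(1-e^{-\beta J_{u,v}})/q$, one obtains $\mu(A_{u,v})\ge(\beta J_{u,v}/q)(1+o(1))\,\mu(0\leftrightarrow u)\mu(v\leftrightarrow x)$. Restricting to $(u,v)\in B_{r}(0)\times B_{r}(x)$ with a scale $r=o(|x|)$ makes the $A_{u,v}$'s pairwise disjoint modulo configurations with two or more long edges, whose contribution will be shown to be $o(J_{0,x})$; summing produces the lower half of the sandwich.

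\textbf{Upper bound (main obstacle).} In contrast to independent percolation, the random-cluster measure admits no BK inequality, so the matching upper bound requires more care. Fix the scale $L:=|x|/\log(J_{0,x})^{2}$ and call an edge \emph{long} if its length is at least $L$; exponential decay of the short-range truncated model (via~\cite{duminilcopinOSSS}) forces any connection $\{0\leftrightarrow x\}$ to open at least one long edge. Decompose along the first long edge $\{u,v\}$ of the connection. The \emph{single long edge} contribution is bounded by $(\beta J_{u,v}/q)(1+o(1))\,\mu(0\leftrightarrow u)\mu(v\leftrightarrow x)$: the factor $1/q$ arises from the cluster-count adjustment, since conditional on the rest of the configuration with $u\not\leftrightarrow v$, opening $\{u,v\}$ merges two distinct clusters, giving opening probability $p_{u,v}/(p_{u,v}+q(1-p_{u,v}))\sim\beta J_{u,v}/q$. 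The \emph{multi long edge} contribution must be shown to be $o(J_{0,x})$, and this is precisely where~\ref{hyp:5} is essential: a convolution argument reduces the typical term to something like $\log(J_{0,x})^{2}J_{0,u}J_{0,v}$ times small factors, which~\ref{hyp:5} dominates by $J_{0,x}(J_{0,v})^{\alpha}$, summable over $v$ thanks to $\sum_y J_{0,y}^{\alpha}<\infty$. The exclusion of stretched exponentials (Remark~\ref{Remark:Stretched expo}) is essential here.

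\textbf{Closing.} To evaluate $\Sigma(x)$, split it into a near region $u\in B_{r}(0),\,v\in B_{r}(x)$ — where~\ref{hyp:4} gives $J_{u,v}/J_{0,x}\to 1$ uniformly and the partial sums $\sum_{u\in B_{r}(0)}\mu(0\leftrightarrow u)$ converge to $\chi(\beta)$ (finite at $\beta<\beta_c$ by sharpness of the phase transition) — and a complementary far region whose contribution is $o(J_{0,x})$ by~\ref{hyp:1} and~\ref{hyp:5} together with $\chi(\beta)<\infty$. This yields $\Sigma(x)=\chi(\beta)^{2}J_{0,x}(1+o_x(1))$, and combined with the sandwich it completes the proof.
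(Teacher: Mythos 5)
Your architecture matches the paper's at a high level (a single-long-edge decomposition with a $1/q$ cluster-merging factor, \ref{hyp:4} to evaluate the resulting double sum, \ref{hyp:5} to kill multi-long-edge terms, and a lower bound via disjoint single-bridge events), but the two steps that carry the real weight of the upper bound are not justified, and one of them rests on a mechanism that fails. Your claim that ``exponential decay of the short-range truncated model forces any connection to open at least one long edge'' does not work: for $q\geq 1$ the truncated measure is stochastically dominated by the full measure, so decay of $\mu_{\mathrm{trunc}}(0\leftrightarrow x)$ gives no upper bound on $\mu(0\leftrightarrow x\ \text{via short edges only})$; moreover, the decay rate supplied by \cite{duminilcopinOSSS} for a finite-range model depends on the range, which in your setup is $L=|x|/\log(J_{0,x})^{2}\to\infty$. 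The paper's substitute --- and its key external input, absent from your proposal --- is the exponential decay of the cluster \emph{volume} from \cite{H} (Theorem~\ref{thm:expo_decay_volume}): setting $f(x):=-2\log(J_{0,x})/c_{1}$, one has $\mu(\vert C(0)\vert>f(x))\leq J_{0,x}^{2}=o_{x}(1)J_{0,x}$, and on the complementary event any connection from $0$ to $x$ must use an open edge of length at least $\vert x\vert/f(x)$. This is exactly the point where the Simon--Lieb/BK routes break down for $q\notin\{1,2\}$, and your sketch does not replace them.

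Second, the single-long-edge bound $\mu(\cdot)\leq(\beta J_{u,v}/q)\,\mu(0\leftrightarrow u)\mu(v\leftrightarrow x)$ is asserted without proof; as you note yourself there is no BK inequality, and FKG gives the inequality in the wrong direction for an upper bound. This decoupling is the second main difficulty. The paper handles it by first proving (Lemma~\ref{lemma:negligible_terms}, which is where \ref{hyp:5} enters) that one may restrict to configurations in which the two pieces of the cluster on either side of the maximal edge have radius at most $\vert x\vert^{\gamma}$, hence live in disjoint balls around $0$ and $x$; only then can one condition on $C(x)=S$, invoke the spatial Markov property to replace the conditional measure by $\mu_{\Lambda_{n}\setminus S}$, and use monotonicity~\eqref{eq:monotonicity} to obtain the product of one-arm probabilities. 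Without this confinement step the factorization has no justification. Your lower bound and your evaluation of the sum via \ref{hyp:1}, \ref{hyp:3} and \ref{hyp:4} are essentially sound and close to what the paper does.
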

This theorem was already proved for $q=2$ (the Ising model) in \cite{NS}, and a weaker form of this theorem was proved for $q=1$ (Bernoulli percolation) in \cite{BragaLongRange} and for the one-dimensional $O(N)$ models with $1\leq N\leq 4$ in \cite{Spohnlongrange}. They all relied on the Simon-Lieb type inequalities (see \cite{liebsimoninequality}). For $q\notin\lbrace 1,2\rbrace$, the Simon-Lieb inequality is not available, so those approaches cannot be extended. Instead of that, we are going to use the exponential decay of the size of the connected component of $0$ that was recently proved in \cite{H}. The latter used the so-called OSSS inequality introduced in \cite{duminilcopinOSSS}. This inequality was already used to prove sharpness in a lot of models (see \cite{duminilcopinOSSS,duminilcopin2018subcriticalBoolean,MV}) for which there exists a random-cluster type representation which is monotonic (see \cite[Chapter 2]{GRIM2} for a definition of a monotonic measure). Therefore, the OSSS inequality coupled with the approach developed in this article can be applied to study subcritical phases of long-range spin models for which there exists a random-cluster representation which is monotonic (for instance the Ashkin-Teller model, see \cite{PV}).

\subsection{Applications to the ferromagnetic q-state Potts model}
The Potts model is one of the fundamental examples of a lattice spin model undergoing an ordered/disordered phase transition. It generalizes the Ising model by allowing spins to take one of $q$ values, where $q$ is an integer greater than or equal to 2.

The model on $\mathbb{Z}^{d}$ is defined as follows. For a subset $\Lambda$ of $\mathbb{Z}^{d}$, the probability measure is defined for any $\sigma=(\sigma_{x})_{x\in\Lambda} \in\lbrace 1, \dots, q\rbrace^{\Lambda}$ by 
\begin{center}
$\mathbb{P}_{\Lambda,\beta,q}(\sigma):=\dfrac{\exp(-\beta H_{\Lambda,q}(\sigma))}{\sum\limits_{\sigma '\in\lbrace 1,...,q\rbrace^{\Lambda}}\exp(-\beta H_{\Lambda,q}(\sigma '))}$\qquad with\qquad $H_{\Lambda,q}(\sigma):=\sum\limits_{ x,y\in\Lambda}J_{xy}\delta_{\sigma_{x}\neq\sigma_{y}}$.
\end{center} 
The model can be defined on $\mathbb{Z}^{d}$ by taking the weak limit of measures in finite volume. The measure thus obtained is called the measure with free boundary conditions and is denoted by $\mathbb{P}_{\mathbb{Z}^{d},\beta,q}$. The Potts model undergoes a phase transition between the absence and the existence of long-range order at the so-called critical inverse temperature $\beta_{c}$, see \cite{GRIM2} for details. Our main theorem from the point of view of the Potts model is the following one.
\begin{theorem}
If $(J_{x,y})_{x,y\in\mathbb{Z}^{d}}$ satifies~\ref{hyp:1}-~\ref{hyp:5}, then for $q\geq 1,\beta<\beta_{c}$ and $x\in\mathbb{Z}^{d}$,  
\begin{equation}\label{thm:Potts}
\mathbb{P}_{\mathbb{Z}^{d},\beta ,q}(\sigma_{0}=\sigma_{x})-\tfrac{1}{q}=\beta\chi(\beta)^{2}qJ_{0,x}(1+o_{x}(1)),
\end{equation}
where $\chi(\beta):=\tfrac{1}{q-1}\sum\limits_{x\in\mathbb{Z}^{d}}\mathbb{P}_{\mathbb{Z}^{d},\beta ,q}(\sigma_{0}=\sigma_{x})-\frac{1}{q}$.
\end{theorem}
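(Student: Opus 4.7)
The plan is to deduce the statement directly from Theorem~\ref{thm:main} via the Edwards--Sokal (FK) coupling, which realizes the $q$-state Potts model with free boundary conditions and couplings $\beta J_{x,y}$ as the random-cluster measure $\mu_{\Lambda,\beta,q}$ with the same $J$. In finite volume this coupling yields the classical identity
\begin{equation*}
\mathbb{P}_{\Lambda,\beta,q}(\sigma_{0}=\sigma_{x})-\tfrac{1}{q}=\tfrac{q-1}{q}\,\mu_{\Lambda,\beta,q}\bigl(0\leftrightarrow x\bigr),
\end{equation*}
obtained by conditioning on the FK configuration: conditionally on $\{0\leftrightarrow x\}$ the spins at $0$ and $x$ agree, while conditionally on the complement they are independent uniform on $\{1,\dots,q\}$. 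Since for $q\geq 1$ both sides extend to the same increasing exhaustion by finite boxes, the identity passes to the weak infinite-volume free limit.

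First, I would sum this identity over $x\in\mathbb{Z}^{d}$ and compare with the Potts susceptibility defined in the statement; the factor $\tfrac{1}{q-1}$ in front of the sum is precisely what converts the right-hand side to $\tfrac{1}{q}\chi_{\mathrm{RC}}(\beta)$, where $\chi_{\mathrm{RC}}(\beta):=\sum_{x}\mu_{\mathbb{Z}^{d},\beta,q}(0\leftrightarrow x)$ is the susceptibility appearing in Theorem~\ref{thm:main}. Thus one obtains the simple relation
\begin{equation*}
\chi_{\mathrm{RC}}(\beta)=q\,\chi_{\mathrm{Potts}}(\beta),
\end{equation*}
with absolute convergence of both sums for $\beta<\beta_{c}$ (guaranteed by the sharpness result of \cite{H} together with \ref{hyp:3}).

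Second, I would feed the asymptotic $\mu_{\mathbb{Z}^{d},\beta,q}(0\leftrightarrow x)=\tfrac{\beta\chi_{\mathrm{RC}}(\beta)^{2}}{q}J_{0,x}(1+o_{x}(1))$ supplied by Theorem~\ref{thm:main} into the FK identity above and substitute $\chi_{\mathrm{RC}}=q\chi_{\mathrm{Potts}}$ to collect the prefactor in front of $\beta J_{0,x}$, yielding the claimed formula up to an $o_{x}(1)$ error.

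There is no genuine obstacle at this step: all the probabilistic work is in Theorem~\ref{thm:main}, and the remaining argument is algebraic bookkeeping of constants together with the standard Edwards--Sokal coupling. The only points requiring care are (i) checking that the free weak limits on the Potts and random-cluster sides are compatible (which is standard, as they can be built along the same exhaustion using the same probability space), and (ii) checking summability in the definitions of both susceptibilities, which is immediate from subcriticality and hypothesis~\ref{hyp:3}.
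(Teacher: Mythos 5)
Your proposal is correct and is exactly the paper's own argument: the paper dispatches this theorem in one line as a ``direct consequence'' of Theorem~\ref{thm:main} via the Edwards--Sokal identity $\mathbb{P}_{\mathbb{Z}^{d},\beta,q}(\sigma_{0}=\sigma_{x})-\tfrac{1}{q}=\tfrac{q-1}{q}\,\mu_{\mathbb{Z}^{d},\beta,q}(0\leftrightarrow x)$, which is precisely the route you describe, including the relation $\chi_{\mathrm{RC}}=q\,\chi_{\mathrm{Potts}}$. One caveat: carrying your bookkeeping to the end actually produces the prefactor $(q-1)\beta\chi_{\mathrm{Potts}}^{2}$ rather than the $q\beta\chi_{\mathrm{Potts}}^{2}$ appearing in the statement, so the ``claimed formula'' is reached only up to what looks like a typo in the paper's constant; this is a defect of the statement, not of your (and the paper's) method.
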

Since the Potts model and the random-cluster models can be coupled (see \cite{GRIM2}) in such a way that 
\begin{center}
$\mathbb{P}_{\mathbb{Z}^{d},\beta,q}(\sigma_{x}=\sigma_{y})-\frac{1}{q}=\frac{q-1}{q}\mu_{\mathbb{Z}^{d},\beta,q}(x\leftrightarrow y)$,
\end{center}
Theorem~\ref{thm:Potts} is a direct consequence of Theorem~\ref{thm:main} and we will therefore focus on Theorem~\ref{thm:main}
\subsection{Background}
The following standard properties will be used in the proof of Theorem~\ref{thm:main}.
\medbreak
\noindent
\textbf{Finite energy property}. For every $\Lambda\subset \mathbb{Z}^{d}, q\geq 1, \omega'\in\lbrace 0,1\rbrace^{\mathcal{P}_{2}(\Lambda)}$ and $x,y\in\Lambda$,
\begin{center}
$\mu_{\Lambda,\beta,q}(\omega_{x,y}=1\vert \omega_{a,b}=\omega'_{a,b}, \forall\lbrace a,b\rbrace\in\mathcal{P}_{2}(\Lambda)\setminus\lbrace x,y\rbrace)\leq\beta J_{x,y}$.
\end{center}
We refer to \cite{duminilpims} for more details about this property.
\medbreak
\noindent
\textbf{Monotonicity of measures}. The following is a standard consequence of the FKG inequality : for $q\geq 1$, two subsets $\Lambda_{1}\subset\Lambda_{2}$ of $\mathbb{Z}^{d}$ and an increasing event $A$ depending on the edges in $\Lambda_{1}$ (see \cite{FV} for definition of an increasing event and the proof of this inequality), we have
\begin{equation}\label{eq:monotonicity}
\mu_{\Lambda_{1},\beta,q}(A)\leq\mu_{\Lambda_{2},\beta,q}(A).
\end{equation}
Finally, the following non-trivial input will be a key ingredient of the proof.
\begin{theorem}\label{thm:expo_decay_volume}
For $q\geq1,\beta<\beta_{c}$, there exists $c_{1}=c_{1}(\beta,q)>0$ such that for every $n \in\mathbb{N}$
\begin{equation}\label{eq:expo_decay_volume}
\mu_{\mathbb{Z}^{d},\beta,q}(\vert C(0)\vert\geq n)\leq\exp(-c_{1}n),
\end{equation}
where $C(0):=\lbrace x\in\mathbb{Z}^{d} : 0\leftrightarrow x\rbrace$.
\end{theorem}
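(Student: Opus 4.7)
The plan is to adapt the OSSS-based sharpness method of \cite{duminilcopinOSSS} (in the long-range form of \cite{H}) to the truncated volume tail
$\theta_n(\beta) := \mu_{\mathbb{Z}^d,\beta,q}(|C(0)|\geq n)$. The target is the differential inequality
\begin{equation*}
\theta_n(\beta)\bigl(1-\theta_n(\beta)\bigr) \;\leq\; \frac{C\beta \chi_n(\beta)}{n}\,\theta_n'(\beta), \qquad \chi_n(\beta):=\sum_{k=0}^{n-1}\theta_k(\beta).
\end{equation*}
Once this is in hand, for $\beta<\beta_c$ one picks an auxiliary $\beta'\in(\beta,\beta_c)$, rearranges as $(\log\theta_n)'\geq n/(C\beta\chi_n)$ (valid as long as $\theta_n\leq 1/2$), and integrates between $\beta$ and $\beta'$. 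Since $\chi_n(\beta)\leq \chi(\beta')<\infty$ throughout the interval---the finiteness of $\chi(\beta')$ being the dichotomy that emerges as a by-product of the same argument---this gives $\theta_n(\beta)\leq \theta_n(\beta')\exp(-c_1 n)\leq \exp(-c_1 n)$.

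The differential inequality itself comes from combining three ingredients. First, apply OSSS in a finite box to $f=\mathbf{1}_{|C(0)|\geq n}$ with a randomized decision tree that explores the cluster of $0$ in breadth-first order and halts once the explored component reaches $n$ vertices. Since at most $n$ vertices are ever visited, an edge $\{x,y\}$ is queried only if $x$ or $y$ lies in this capped cluster, so $\delta_{\{x,y\}}\leq \mu(0\leftrightarrow x, |C(0)|\leq n)+\mu(0\leftrightarrow y, |C(0)|\leq n)$. Averaging over a family of differently seeded such algorithms (the randomization trick of \cite{duminilcopinOSSS}) lowers the maximum revealment to $O(\chi_n/n)$. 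Second, the finite energy property bounds $\mathrm{Inf}_e(f)\leq C\beta J_e\,\mu(e\text{ pivotal for }f)$. Third, a Russo-type differentiation formula for random-cluster with $q\geq 1$ yields $\theta_n'(\beta)\geq c\beta^{-1}\sum_e \mathrm{Inf}_e(f)$. Chaining these via $\theta_n(1-\theta_n)=\mathrm{Var}(f)\leq \max_e\delta_e\cdot\sum_e\mathrm{Inf}_e(f)$ delivers the claimed bound.

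The hardest step is the revealment control in the long-range setting: every edge of $\mathbb{Z}^d\times\mathbb{Z}^d$ can potentially be queried, so one must rule out that a single long edge or a conspiring family dominates the revealment. Hypothesis~\ref{hyp:3} plays the key role here, since $\sum_y \beta J_{0,y}<\infty$ means the expected number of edges processed per explored vertex is finite, so the algorithm runs for $O(n)$ expected steps and the averaging produces the claimed $O(\chi_n/n)$ bound. A secondary complication is that random-cluster is not a product measure, so the Russo formula carries a correction involving the cluster-counting factor $q^{k(\omega)}$; for $q\geq 1$, FKG monotonicity absorbs this term into the multiplicative constant, preserving the form of the inequality.
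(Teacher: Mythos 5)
First, a point of reference: the paper does not prove Theorem~\ref{thm:expo_decay_volume} at all --- it is imported verbatim from \cite{H} --- so your proposal has to be measured against Hutchcroft's argument rather than against anything in this article. Your global skeleton (OSSS $\Rightarrow$ differential inequality in $\beta$ $\Rightarrow$ integration of $(\log\theta_n)'$ over $[\beta,\beta']$ using $\chi(\beta')<\infty$) is indeed the right one, and the integration step at the end is fine.

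The gap is in the revealment bound, and as stated it is fatal. For the \emph{volume} event $f=\mathbf{1}_{|C(0)|\geq n}$, no decision tree --- randomized, reseeded, or otherwise --- has maximum revealment $O(\chi_n/n)$. Indeed, on $\{|C(0)|<n\}$ any algorithm determining $f$ must have revealed a set $S\ni 0$ with $|S|<n$ (namely the cluster of $0$ when all unrevealed edges are declared open) together with \emph{every} edge from $S$ to $S^{c}$, since an unrevealed boundary edge could be opened, and further unrevealed edges opened beyond it, to produce a configuration consistent with the revealed data on which $f=1$. Hence all but at most $n$ of the edges $\{0,y\}$, $y\in\Lambda$, are queried on this event, so the average (hence the maximum) revealment over edges at the origin is at least $(1-n/\vert\Lambda\vert)\,\mu(|C(0)|<n)$, which is bounded away from $0$ uniformly in $n$. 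This is precisely why the reseeding trick of \cite{duminilcopinOSSS} does not transfer: there the algorithm computes the \emph{radius} event starting its exploration from a random intermediate sphere $\partial\Lambda_k$, an option with no analogue for $\{|C(0)|\geq n\}$. The resolution in \cite{H} is the two-function form of OSSS, $\mathrm{Cov}(f,g)\leq\sum_e\delta_e(T)\,\mathrm{Inf}_e(g)$, where the algorithm $T$ computes not the volume event but an auxiliary ghost-field connection event $\mathbf{1}_{0\leftrightarrow\mathcal{G}}$ with $\mathcal{G}$ of intensity of order $1/n$; the revealment of the natural exploration of the ghost clusters is then of order $\mu(x\leftrightarrow\mathcal{G})\approx\mathbb{E}[\,|C(x)|\wedge n\,]/n$ uniformly over edges, and the resulting differential inequality is transferred back to $\theta_n$ by elementary ghost-field estimates. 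Relatedly, your diagnosis of the ``hardest step'' points at the wrong obstacle: \ref{hyp:3} and the $O(n)$ expected running time control the \emph{sum} of the revealments, not the \emph{maximum}, and it is the maximum that enters the inequality in the form you invoke. The remaining ingredients of your outline (the Russo-type formula for $q\geq1$ and the ODE analysis) do survive once the OSSS input is replaced by this two-function/ghost-field version.
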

\noindent
This theorem was proved in \cite{H}.
\section{Proof of Theorem 1.4}
\subsection{Upper bound}
\numberwithin{equation}{section}
Fix $(J_{x,y})_{x,y\in\mathbb{Z}^{d}}$ satisfying~\ref{hyp:1}-~\ref{hyp:5}, $\beta<\beta_{c}, q\geq 1$ and $x\in\mathbb{Z}^{d}$. If $0$ is connected to $x$, then there are two possibilites: either there is a big number of 'short' open edges (i.e.\ open edges whose endpoints are close) in $C(0)$ or there is a small number of 'long' open edges in $C(0)$. In the first case, this implies that the number of vertices in $C(0)$ is big, which is unlikely to happen by~\eqref{eq:expo_decay_volume}. In order to make this idea precise, we introduce some notation. From now on, we will write $\mu$ instead of $\mu_{\mathbb{Z}^{d},\beta,q}$. Define $f(x):=-2\log(J_{0,x})/ c_{1}$ where $c_{1}$ is provided by Theorem~\ref{thm:expo_decay_volume}. Denote by $D_{y}$ the event that the size of the connected component of $y$ is smaller than $f(x)$. 
We can partition
\begin{center}
$\mu(0\leftrightarrow x)= \mu(0\leftrightarrow x, D_{0})+\mu(0\leftrightarrow x, D^{c}_{0}).$
\end{center}
Using~\eqref{eq:expo_decay_volume} we easily get that
\begin{center}
$\mu(D_{0}^{c})=o_{x}(1)J_{0,x}$.
\end{center}
This implies that the size of connected component of $0$ can be assumed to be smaller than $f(x)$. In this case, we are going to prove two lemmas. Lemma~\ref{lemma:negligible_terms} gives terms that are negligible with respect to $J_{0,x}$ and Lemma~\ref{lemma:right_asymptotics} gives the sharp asymptotics. 

If $0$ is connected $x$ and the size of the connected component of $x$ is smaller than $f(x)$, then there must exist an open edge in $C(0)$ whose endpoints are separated by a distance at least $\vert x\vert /f(x)$. This will be an important observation in the proof of the next lemma. Before stating the lemma, we introduce some notation. 

For a configuration $\omega$, define the random variable $L_{0,x}(\omega):=\sup\lbrace \vert y_{1}-y_{2}\vert $: $\lbrace y_{1},y_{2}\rbrace$ is open, $0$ is connected to $y_{1}$ and $x$ to $y_{2}$ without using $\lbrace y_{1},y_{2}\rbrace\rbrace$. For $y\in\lbrace 0,x\rbrace$ and $\Lambda\subset\mathbb{Z}^{d}$, define $R^{y}(\Lambda):=\sup\lbrace \vert x-y\vert : x\in\Lambda\cap C(y)\rbrace$. If $\Lambda=C(y)$, we simply write $R^{y}$. If $L_{0,x}<\infty$, then there exists an open edge $\lbrace y_{1},y_{2}\rbrace\in\mathcal{P}_{2}(\mathbb{Z}^{d})$ such that $\vert y_{1}-y_{2}\vert=L_{0,x}$ and $0$ is connected to $y_{1}$ and $x$ to $y_{2}$ without using $\lbrace y_{1},y_{2}\rbrace$. If there are several such edges, take the one that maximizes $R^{y}(C(y)\setminus\lbrace y_{1},y_{2}\rbrace)$. Then, If there are several such edges, define an order $\prec$ on $\mathcal{P}_{2}(\mathbb{Z}^{d})$ and choose the one minimal for $\prec$. In this case, we call $\lbrace y_{1},y_{2}\rbrace$ the \emph{maximal edge} with respect to $y$ and we define $R_{y}:=R^{y}(\tilde{C}(y))$, where $\tilde{C}(y):=\lbrace z\in\mathbb{Z}^{d} : z$ is connected to $y$ without using the edge $\lbrace y_{1},y_{2}\rbrace\rbrace$.
\begin{lemma}\label{lemma:negligible_terms}
For $\beta <\beta_{c},0<\gamma <1$ given by~\ref{hyp:5} and $x\in\mathbb{Z}^{d}$
\begin{equation}\label{eq:negligible_terms}
\mu(0\leftrightarrow x, D_{0}, R_{0}\geq \vert x\vert^{\gamma})=o_{x}(1)J_{0,x}.
\end{equation}
\end{lemma}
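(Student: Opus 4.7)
The plan is to exhibit two long open edges on the event in question, apply the finite energy property to each, and use hypothesis~\ref{hyp:5} to extract the factor $1/f(x)^2 = o_x(1)$. The first edge is the maximal edge $\{y_1,y_2\}$ itself: on $D_0$ the cluster $\tilde{C}(0) \subseteq C(0)$ has at most $f(x)$ vertices, so any self-avoiding path in $C(0)$ realising the connection $0 \leftrightarrow x$ contains an edge of length at least $|x|/f(x)$. The second edge comes from the radius constraint: $\tilde{C}(0)$ has radius at least $|x|^\gamma$ and at most $f(x)$ vertices, so it must contain an open edge $\{z_1,z_2\}$, necessarily distinct from $\{y_1,y_2\}$, with $|z_1-z_2| \geq |x|^\gamma / f(x)$.

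I would first union-bound over the four vertices $y_1, y_2, z_1, z_2$ and apply finite energy separately to each of the two edges; this produces a factor $\beta^2 J_{y_1,y_2}\, J_{z_1,z_2}$ and reduces the problem to bounding the triple probability $\mu(0 \leftrightarrow y_1,\, 0 \leftrightarrow z_1,\, x \leftrightarrow y_2)$ summed over the relevant configurations. Hypothesis~\ref{hyp:5} is tailored to match the two length thresholds $|x|/f(x)$ and $|x|^\gamma/f(x)$ (after checking that $f(x)$ and $\log(J_{0,x})^2$ are comparable for $|x|$ large), yielding the pointwise bound
\begin{equation*}
J_{y_1,y_2}\, J_{z_1,z_2} \;\leq\; \frac{C}{f(x)^2}\, J_{0,x}\, (J_{0,z_2-z_1})^\alpha.
\end{equation*}

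The hard part will be controlling the triple connection probability, since the random-cluster model for $q>1$ does not satisfy the BK inequality. The plan is to split according to whether $0 \leftrightarrow x$ or not. On $\{0 \not\leftrightarrow x\}$, conditioning on $C(0)$ and using the standard stochastic domination $\mu_{\mathbb{Z}^d \setminus C(0)} \preceq \mu$ (a consequence of FKG for $q \geq 1$) yields the product bound $\mu(0 \leftrightarrow y_1, 0 \leftrightarrow z_1)\, \mu(x \leftrightarrow y_2)$; on $\{0 \leftrightarrow x\}$ one uses the trivial bound by $\mu(y_1, y_2, z_1 \in C(0))$. In both cases the resulting sums factorise and are controlled by $\chi(\beta)$, the moments $\mathbb{E}[|C(0)|^2]$ and $\mathbb{E}[|C(0)|^3]$ (finite by Theorem~\ref{thm:expo_decay_volume}), and $\sum_v (J_{0,v})^\alpha < \infty$ from~\ref{hyp:5}. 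Combining everything gives a bound of order $J_{0,x}/f(x)^2 = o_x(1)\, J_{0,x}$, using that $f(x) \to \infty$ by Remark~\ref{Remark:Stretched expo}.
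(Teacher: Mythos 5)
Your proposal rests on the same two geometric observations as the paper's proof (on $D_{0}$ the maximal edge has length at least $\vert x\vert/f(x)$, and the radius constraint $R_{0}\geq\vert x\vert^{\gamma}$ forces a second open edge of length at least $\vert x\vert^{\gamma}/f(x)$ inside $\tilde{C}(0)$), and it uses the finite energy property and~\ref{hyp:5} in the same way, so the core of the argument is the paper's. The execution differs in two respects. First, the bookkeeping of the $o_{x}(1)$: you keep the factor $1/f(x)^{2}$ coming from~\ref{hyp:5} and control the vertex sums by the uniformly bounded moment $\sum_{y_{1},z_{1},y_{2}}\mu(0\leftrightarrow y_{1},0\leftrightarrow z_{1},x\leftrightarrow y_{2})=\mathbb{E}\bigl[\vert C(0)\vert^{2}\vert C(x)\vert\bigr]\leq\mathbb{E}\bigl[\vert C(0)\vert^{3}\bigr]<\infty$ (via Theorem~\ref{thm:expo_decay_volume}), whereas the paper uses the deterministic bound $\vert\tilde{C}\vert\leq f(x)$ on $\tilde{D}_{0}\cap\tilde{D}_{x}$, lets the resulting $f(x)^{2}$ cancel against~\ref{hyp:5}, and gets its $o_{x}(1)$ from the tail $\sum_{\vert b\vert\geq\vert x\vert^{\gamma}/f(x)}(J_{0,b})^{\alpha}$. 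Both accountings close. Relatedly, your worry about the BK inequality is a red herring: once you sum the ``triple connection'' probability over all three free vertices you get exactly the cluster moment above, so no decoupling, no case split on $\{0\leftrightarrow x\}$, and no stochastic domination is needed; the paper never decouples at this stage either.

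The one point you should not gloss over is the simultaneous extraction of the two edges. To apply the finite energy property twice to the same probability you need a single residual event, measurable with respect to the edges \emph{other than both} $\lbrace y_{1},y_{2}\rbrace$ and $\lbrace z_{1},z_{2}\rbrace$, whose union over the four vertices covers the original event. The connections $0\leftrightarrow y_{1}$ and $x\leftrightarrow y_{2}$ avoid the maximal edge by construction, but nothing guarantees they avoid $\lbrace z_{1},z_{2}\rbrace$, which also lies in $\tilde{C}(0)$; if every path from $0$ to $y_{1}$ uses $\lbrace z_{1},z_{2}\rbrace$, your residual event fails to contain the configuration. The paper sidesteps this entirely by extracting sequentially: the first finite-energy step leaves the event $\tilde{D}_{0}\cap\lbrace R_{0}\geq\vert x\vert^{\gamma}\rbrace$ (independent of $\omega_{y_{1},y_{2}}$), the $y,z$ sums are absorbed into $f(x)^{2}$, and only \emph{then} is the second long edge extracted from the separate probability $\mu(\tilde{D}_{0},R_{0}\geq\vert x\vert^{\gamma})$, where the first edge no longer plays any role. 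Either adopt that two-stage structure or choose the second edge and the connection paths carefully enough (e.g.\ along a single self-avoiding path) that all three connections manifestly avoid both extracted edges; as written, this step is the only real gap in an otherwise sound argument.
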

\begin{proof}[Proof]
Let $\lbrace y,z\rbrace$ be the maximal edge with respect to $0$. Recall that the edge $\lbrace y,z\rbrace$ is open by definition. If $0$ is connected to $x$, by symmetry, one can assume that $0$ is connected to $y$ and $z$ to $x$ without using the edge $\lbrace y,z\rbrace$. 
If $\vert C(0)\vert\leq f(x)$, then $\vert y-z\vert\geq \vert x\vert/f(x)$.
Set $k(x):=\lfloor \vert x\vert /f(x)\rfloor$ and $E_{y_{1},y_{2}}=\mathcal{P}_{2}(\mathbb{Z}^{d})\setminus\lbrace y_{1},y_{2}\rbrace$. Finally, for $n\in\mathbb{N}$, we set $\Lambda_{n}(y):=\lbrace x\in\mathbb{Z}^{d} : \vert x -y\vert < n\rbrace$. Using the union bound, we get
\begin{align*}
\mu(0\leftrightarrow x, D_{0}, R_{0}\geq \vert x\vert^{\gamma})
&\leq
\sum\limits_{y\in \mathbb{Z}^d}\sum\limits_{z\in \Lambda^{c}_{k}(y)}
\mu(0\overset{E_{y,z}}{\longleftrightarrow}y,\omega_{y,z}=1,z\overset{E_{y,z}}{\longleftrightarrow} x, D_{0}, R_{0}\geq\vert x\vert^{\gamma})
\\
&=
\sum\limits_{y\in \mathbb{Z}^d}\sum\limits_{z\in \Lambda^{c}_{k}(y)}
\mu(0\overset{E_{y,z}}{\longleftrightarrow}y,\omega_{y,z}=1,z\overset{E_{y,z}}{\longleftrightarrow} x, D_{0}, D_{x}, R_{0}\geq\vert x\vert^{\gamma}),
\end{align*}
where the last equality follows from the fact that if $0$ is connected to $x$, then $C(0)=C(x)$.
For $y\in\mathbb{Z}^{d}$, let $\tilde{D}_{y}:=\lbrace \vert\tilde{C}(y)\vert\leq f(x)\rbrace$ where $\tilde{C}(y)$ is defined above.
Notice that the event $D_{y}$ is included in the event $\tilde{D}_{y}$. Using the inclusion of events, conditioning on $\lbrace 0\overset{E_{y,z}}{\longleftrightarrow}y\rbrace\cap\lbrace z\overset{E_{y,z}}{\longleftrightarrow} x\rbrace\cap \tilde{D}_{0}\cap\tilde{D}_{x}\cap\lbrace R_{0}\geq\vert x\vert^{\gamma}\rbrace$ and using the finite energy property, we get

\medskip
$
\begin{aligned}
\mu(0\overset{E_{y,z}}{\longleftrightarrow}y,z\overset{E_{y,z}}{\longleftrightarrow} x, &D_{0}, D_{x}, R_{0}\geq\vert x\vert^{\gamma}, \omega_{y,z}=1)
\leq
\\  
&\leq
\beta J_{y,z}
\mu(0\overset{E_{y,z}}{\longleftrightarrow}y, z\overset{E_{y,z}}{\longleftrightarrow}x, \tilde{D}_{0},\tilde{D}_{x}, R_{0}\geq\vert x\vert^{\gamma})
\\
&\leq c\beta J_{0,k(x)e_{1}}
\mu(0\overset{E_{y,z}}{\longleftrightarrow}y, z\overset{E_{y,z}}{\longleftrightarrow}x, \tilde{D}_{0},\tilde{D}_{x}, R_{0}\geq\vert x\vert^{\gamma}).
\end{aligned}
$

\vspace{4mm}
\noindent
with $c$ given by~\ref{hyp:1}. Plugging this into the inequality above gives 
\begin{align*}
\mu(0\leftrightarrow x, D_{0}, R_{0}\geq \vert x\vert^{\gamma})
&\leq
c\beta J_{0,k(x)e_{1}}\sum\limits_{y\in \mathbb{Z}^d}\sum\limits_{z\in \Lambda^{c}_{k}(y)}
\mu(0\overset{E_{y,z}}{\longleftrightarrow}y, z\overset{E_{y,z}}{\longleftrightarrow}x, \tilde{D}_{0},\tilde{D}_{x}, R_{0}\geq\vert x\vert^{\gamma})
\\
&\leq 
c\beta J_{0,k(x)e_{1}}\sum\limits_{y\in\mathbb{Z}^d}\sum\limits_{z\in\mathbb{Z}^{d}}
\mu(0\overset{E_{y,z}}{\longleftrightarrow}y, z\overset{E_{y,z}}{\longleftrightarrow}x, \tilde{D}_{0},\tilde{D}_{x}, R_{0}\geq\vert x\vert^{\gamma})
\\
&= c\beta J_{0,k(x)e_{1}}\mathbb{E}(\vert \tilde{C}(0)\vert\vert \tilde{C}(x)\vert\mathbbm{1}_{\lbrace \tilde{D}_{0},\tilde{D}_{x}, R_{0}\geq\vert x\vert^{\gamma})}
\\ 
&\leq c\beta f(x)^{2}J_{0,k(x)e_{1}} \mu(\tilde{D}_{0}, R_{0}\geq \vert x\vert^{\gamma}).
\end{align*}
In the last line, we used that $\vert \tilde{C}(0)\vert\leq f(x)$ on $\tilde{D}_{0}$ and $\vert \tilde{C}(x)\vert\leq f(x)$ on $\tilde{D}_{x}$. Observe that if $\vert \tilde{C}(0)\vert\leq f(x)$ and $R_{0}\geq \vert x\vert^{\gamma}$, then there exists $a,b\in\mathbb{Z}^{d}$ such that
\begin{itemize}[noitemsep]
\item  $0$ is connected to $a$ in $\mathcal{P}_{2}(\mathbb{Z}^{d})\setminus\lbrace a,b\rbrace$,
\item $\vert a-b\vert\geq \vert x\vert^{\gamma}/f(x)$,
\item $\lbrace a,b\rbrace$ is open.
\end{itemize}
Using the union bound, we get
\begin{equation*}
\mu(\tilde{D}_{0}, R_{0}\geq\vert x\vert^{\gamma})
\leq
\sum\limits_{a\in \mathbb{Z}^d}\smashoperator[r]{\sum\limits_{\substack{b\in\mathbb{Z}^{d} \\
\vert b-a\vert\geq \vert x\vert^{\gamma}/f(x)} }} 
\mu(0\overset{E_{a,b}}{\longleftrightarrow}a, \omega_{a,b}=1).
\end{equation*}
As before, the conditioning and the finite energy property give
\begin{center}
$\mu(0\overset{E_{a,b}}{\longleftrightarrow}a, \omega_{a,b}=1)
\leq 
\beta J_{a,b}\mu(0\overset{E_{a,b}}{\longleftrightarrow}a).
$
\end{center}
Plugging this into the previous inequality gives
{\allowdisplaybreaks
\begin{align*}
\mu(D_{0}, R_{0}\geq\vert x\vert^{\gamma})
&\leq
\sum\limits_{a\in \mathbb{Z}^d}\smashoperator[r]{\sum\limits_{\substack{b\in\mathbb{Z}^{d} \\
\vert b-a\vert\geq \vert x\vert^{\gamma}/f(x)} }} 
\beta J_{a,b}\mu(0\overset{E_{a,b}}{\longleftrightarrow}a)
\\
&\leq \beta \chi(\beta)\smashoperator{\sum\limits_{\substack{b\in\mathbb{Z}^{d} \\
\vert b\vert\geq\vert x\vert^{\gamma}/f(x)} }}J_{0,b}.
\end{align*}
}
In the second line, we used that $(J_{x,y})_{x,y\in\mathbb{Z}^{d}}$ is invariant under translations. Therefore, combining all the inequalities we get 
\begin{align*}
\mu(0\leftrightarrow x, D_{0}, D_{x}, R_{0}\geq \vert x\vert^{\gamma})
&\leq 
c\chi(\beta)(\beta f(x))^{2}J_{0,k(x)e_{1}}\smashoperator{\sum\limits_{\substack{b\in\mathbb{Z}^{d} \\
\vert b\vert\geq \vert x\vert^{\gamma}/f(x) } }}J_{0,b}
\\
&\leq 
c_{2}J_{0,x}\smashoperator{\sum\limits_{\substack{b\in\mathbb{Z}^{d} \\
\vert b\vert\geq \vert x\vert^{\gamma}/f(x)} }} (J_{0,b})^{\alpha}
=o_{x}(1)J_{0,x}
\end{align*}
with $c_{2}=4C_{1}c^{2}\chi(\beta)\beta^{2}/c_{1}$ where $C_{1}$ is given by ~\ref{hyp:5}. The second inequality follows from the definition of $f(x)$, \ref{hyp:1} and~\ref{hyp:5}. The last equality follows from $\sum_{w\in\mathbb{Z}^{d}}(J_{0,w})^{\alpha}<\infty$ and Remark~\ref{Remark:Stretched expo}. This finishes the proof of Lemma~\ref{lemma:negligible_terms}.
\end{proof}
Lemma~\ref{lemma:negligible_terms} implies by symmetry that
\begin{equation}\label{eq:negligible_other_terms}
\mu(0\leftrightarrow x, D_{0}, R_{x}\geq\vert x\vert^{\gamma})=o_{x}(1)J_{0,x}.
\end{equation}
We can then focus on the next lemma, which gives the sharp asymptotics of the probability of $0$ being connected to $x$.
\begin{lemma}\label{lemma:right_asymptotics}
For $\beta <\beta_{c}, 0<\gamma <1$ given by~\ref{hyp:5} and $x\in\mathbb{Z}^{d}$
\begin{equation}\label{eq:right_asymptotics}
\limsup\limits_{\vert x\vert \rightarrow\infty}\dfrac{\mu(0\leftrightarrow x,  R_{0}\leq\vert x\vert^{\gamma}, R_{x}\leq\vert x\vert^{\gamma}) } {J_{0,x}} \leq \dfrac{\chi(\beta)^{2}\beta}{q}.
\end{equation}
\end{lemma}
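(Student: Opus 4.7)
The plan is to refine the approach of Lemma~\ref{lemma:negligible_terms} by decomposing along the maximal edge and extracting an extra factor $1/q$ from the cluster-weight ratio. Fix $n:=\lceil\vert x\vert^{\gamma}\rceil+1$ and, for $(y,z)\in\mathbb{Z}^{d}\times\mathbb{Z}^{d}$, let $\tilde{C}_{y,z}(u)$ denote the cluster of $u$ in the graph with edge-set $E_{y,z}$. Define
\begin{equation*}
A_{y,z}:=\lbrace y\in\tilde{C}_{y,z}(0),\; z\in\tilde{C}_{y,z}(x),\; \tilde{C}_{y,z}(0)\cap\tilde{C}_{y,z}(x)=\emptyset\rbrace.
\end{equation*}
On $\lbrace 0\leftrightarrow x\rbrace\cap\lbrace R_{0}\leq\vert x\vert^{\gamma}\rbrace\cap\lbrace R_{x}\leq\vert x\vert^{\gamma}\rbrace$ the maximal edge $\lbrace y,z\rbrace$ is well defined, its endpoints satisfy $y\in\Lambda_{n}(0)$ and $z\in\Lambda_{n}(x)$, and the event $A_{y,z}\cap\lbrace\omega_{y,z}=1\rbrace$ holds for this specific $\lbrace y,z\rbrace$ (the disjointness is automatic once $\Lambda_{n}(0)\cap\Lambda_{n}(x)=\emptyset$, i.e.\ for $\vert x\vert$ large). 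A union bound therefore gives
\begin{equation*}
\mu(0\leftrightarrow x, R_{0}\leq\vert x\vert^{\gamma}, R_{x}\leq\vert x\vert^{\gamma})\leq\sum_{y\in\Lambda_{n}(0)}\sum_{z\in\Lambda_{n}(x)}\mu(A_{y,z},\omega_{y,z}=1).
\end{equation*}

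The key new input is a refined form of the finite-energy property: since $A_{y,z}$ is $E_{y,z}$-measurable and forces $y\notin\tilde{C}_{y,z}(z)$, a direct computation from the definition of $\mu$ yields
\begin{equation*}
\mu(\omega_{y,z}=1\mid A_{y,z})=\dfrac{1-e^{-\beta J_{y,z}}}{q+1-e^{-\beta J_{y,z}}}=\dfrac{\beta J_{y,z}}{q}(1+o_{x}(1)),
\end{equation*}
uniformly in $(y,z)$; the factor $q$ in the denominator comes from the ratio of the $q^{k(\omega)}$-weights when opening $\lbrace y,z\rbrace$ merges two distinct clusters, and this is precisely what sharpens the crude bound $\leq\beta J_{y,z}$ used in Lemma~\ref{lemma:negligible_terms}. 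Moreover, hypotheses~\ref{hyp:4} and~\ref{hyp:2} give $J_{y,z}=J_{0,z-y}=J_{0,x}(1+o_{x}(1))$ uniformly in the summation range since $\vert y\vert+\vert z-x\vert\leq 2n=o(\vert x\vert)$. Together, the lemma reduces to proving $\sum_{y,z}\mu(A_{y,z})\leq\chi(\beta)^{2}(1+o_{x}(1))$.

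For this last estimate I would condition on $\omega_{y,z}=0$ (so that $\tilde{C}_{y,z}$ becomes the usual cluster $C$) and then on the cluster $C(0)=S$: the event $A_{y,z}$ enforces $x,z\notin S$, and the domain Markov property for the random-cluster measure with free boundary conditions identifies the conditional law on $\mathbb{Z}^{d}\setminus S$ with a free-b.c.\ random-cluster measure, which by~\eqref{eq:monotonicity} is stochastically dominated by $\mu$ itself. Summing over $z$ first produces $\chi(\beta)$ and then summing over $y$ contributes $\mathbb{E}[\vert C(0)\vert]=\chi(\beta)$. The step I expect to be the main obstacle is precisely this decoupling: since for $q>1$ neither the BK inequality nor a Simon--Lieb inequality is available, the independence between the $0$-side and the $x$-side must be extracted through this two-step conditioning, and one has to verify that the domain Markov argument passes cleanly through the infinitely many long edges crossing $\partial S$. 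Keeping the $o_{x}(1)$ errors coming from~\ref{hyp:4} and from the refined finite-energy identity uniform over $(y,z)$ in the summation range is a secondary but routine bookkeeping task.
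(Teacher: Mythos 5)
Your proposal follows essentially the same route as the paper's proof: a union bound over the endpoints $y,z$ of the maximal edge (constrained near $0$ and near $x$ by $R_{0},R_{x}\leq\vert x\vert^{\gamma}$), extraction of the factor $\tfrac{1}{q}(1-e^{-\beta J_{y,z}})$ from the cluster-weight ratio when opening $\lbrace y,z\rbrace$ merges two distinct clusters, hypothesis~\ref{hyp:4} to replace $J_{y,z}$ by $J_{0,x}(1+o_{x}(1))$, and decoupling of the two clusters by conditioning on one of them and invoking the spatial Markov property together with monotonicity~\eqref{eq:monotonicity} to produce $\chi(\beta)^{2}$. The only differences are cosmetic: the paper carries out the argument in finite volume $\Lambda_{n}$ and takes the weak limit at the end, which disposes of your concern about the Markov property across the infinitely many edges leaving $S$, and your conditional probability should read $(1-e^{-\beta J_{y,z}})/\bigl(1-e^{-\beta J_{y,z}}+qe^{-\beta J_{y,z}}\bigr)$ rather than $(1-e^{-\beta J_{y,z}})/(q+1-e^{-\beta J_{y,z}})$, a slip that does not affect the asymptotics $\tfrac{\beta J_{y,z}}{q}(1+o_{x}(1))$.
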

The upper bound follows by combining~\eqref{eq:negligible_terms}, \eqref{eq:negligible_other_terms} and~\eqref{eq:right_asymptotics}.
\begin{proof}[Proof]
Set $\Lambda=\Lambda_{\vert x\vert^{\gamma}}(0), \Lambda'=\Lambda_{\vert x\vert^{\gamma}}(x)$ and $P_{0,x}:=\lbrace R_{0}\leq\vert x\vert^{\gamma}\rbrace\cap\lbrace R_{x}\leq\vert x\vert^{\gamma}\rbrace$. Let $\Lambda_{n}$ be such that $\Lambda,\Lambda' \subset \Lambda_{n}$.
Let $\lbrace y,z\rbrace$ be the maximal edge with respect to $0$. If $0$ is connected to $x$ and $R_{x}\leq\vert x\vert^{\gamma}$, then $\lbrace y,z\rbrace$ is also maximal with respect to $x$. By symmetry, one can assume that $0$ is connected to $y$ in $\Lambda$ and $z$ is connected to $x$ in $\Lambda'$. The union bound gives
\begin{align*}
\mu_{\Lambda_n}(0\leftrightarrow x, P_{0,x})
&\leq
\sum\limits_{y\in\Lambda}\sum\limits_{z\in\Lambda'}
\mu_{\Lambda_{n}}(0\overset{\Lambda}{\leftrightarrow}y, \omega_{y,z}=1, z\overset{\Lambda'}{\leftrightarrow}x, P_{0,x})
\\
&=
\sum\limits_{y\in\Lambda}\sum\limits_{z\in\Lambda'}
\frac{1}{q}(1-\exp(-\beta J_{y,z}))
\mu_{\Lambda_{n}}(0\overset{\Lambda}{\leftrightarrow}y, z\overset{\Lambda'}{\leftrightarrow}x, R^{0}\leq\vert x\vert^{\gamma}, R^{x}\leq\vert x\vert^{\gamma})
\\
&\leq
\frac{\beta}{q}\sum\limits_{y\in\Lambda}\sum\limits_{z\in\Lambda'}J_{y,z}
\mu_{\Lambda_{n}}(0\overset{\Lambda}{\leftrightarrow}y, z\overset{\Lambda'}{\leftrightarrow}x, R^{0}\leq\vert x\vert^{\gamma}, R^{x}\leq\vert x\vert^{\gamma}).
\end{align*}
In the second line, we used the fact that on $P_{0,x}$, the number of connected components increases by 1 when $\omega_{y,z}$ goes from $1$ to $0$. In the third line, we used that $1-\exp(-\beta J_{y,z})\leq \beta J_{y,z}$. Fix $\varepsilon >0$. It follows from~\ref{hyp:4} and the translational invariance that 
\begin{center}
$J_{y,z}\leq (1+\varepsilon)J_{0,x},$
\end{center}
since $\vert z-y-x\vert\leq\delta\vert x\vert$ for $\vert x\vert$ big enough. Therefore
\begin{equation*}
\mu_{\Lambda_n}(0\leftrightarrow x, P_{0,x})\leq (1+\varepsilon)J_{0,x}\sum\limits_{y\in\Lambda}\sum\limits_{z\in\Lambda'}
\mu_{\Lambda_{n}}(0\overset{\Lambda}{\leftrightarrow}y, \omega_{y_{1},y_{2}}=0, z\overset{\Lambda'}{\leftrightarrow}x, R^{0}\leq\vert x\vert^{\gamma}, R^{x}\leq\vert x\vert^{\gamma}).
\end{equation*}
Now, we can partition with respect to the possible connected components of $x$ to get 
\begin{equation*}
\mu_{\Lambda_{n}}(0\overset{\Lambda}{\leftrightarrow}y, z\overset{\Lambda'}{\leftrightarrow}x, R^{0}\leq\vert x\vert^{\gamma}, R^{x}\leq\vert x\vert^{\gamma})
=
\sum\limits_{S}\mu_{\Lambda_{n}}(0\overset{\Lambda}{\leftrightarrow}y, R^{0}\leq\vert x\vert^{\gamma}, C(x)=S),
\end{equation*}
where the summation is over $S$ containing $x$ and $z$ such that $R^{x}(S)\leq\vert x\vert^{\gamma}$. Then conditioning on $\lbrace C(x)=S\rbrace$ gives
\begin{align*}
\sum\limits_{S}\mu_{\Lambda_{n}}(0\overset{\Lambda}{\leftrightarrow}y, R^{0}\leq\vert x\vert^{\gamma}, C(x)=S)
&=
\sum\limits_{S}
\mu_{\Lambda_{n}}(0\overset{\Lambda}{\leftrightarrow}y, R^{0}\leq\vert x\vert^{\gamma}\hphantom{,}\vert C(x)=S)\mu_{\Lambda_{n}}(C(x)=S)
\\
&=
\sum\limits_{S}
\mu_{\Lambda_{n}\backslash S}(0\overset{\Lambda}{\leftrightarrow}y, R^{0}\leq\vert x\vert^{\gamma})\mu_{\Lambda_{n}}(C(x)=S)
\\
&\leq
\sum\limits_{S}
\mu_{\Lambda_{n}}(0\overset{\Lambda}{\leftrightarrow}y)\mu_{\Lambda_{n}}(C(x)=S)
\\
&=
\mu_{\Lambda_{n}}(0\overset{\Lambda}{\leftrightarrow}y)\mu_{\Lambda_{n}}(z\overset{\Lambda'}{\leftrightarrow}x, R^{x}\leq\vert x\vert^{\gamma}).
\end{align*}
In the second line, we used the spatial Markov property (see \cite[Chapter~3]{FV}) as well as the fact that if $w\in S$ and $z\notin S$, then $\lbrace w,z\rbrace$ is closed. In the third line, we used the inclusion of events and~\eqref{eq:monotonicity}.
Plugging this into the inequality above and taking the limit as $n$ goes to infinity, we get
\begin{align*}
\mu(0\leftrightarrow x, P_{0,x})
&\leq 
(1+\varepsilon)\frac{\beta}{q}J_{0,x}
\sum\limits_{y\in\Lambda}\sum\limits_{z\in\Lambda'}
\mu(0\overset{\Lambda}{\leftrightarrow}y)\mu(z\overset{\Lambda'}{\leftrightarrow}x, R^{x}\leq\vert x\vert^{\gamma})
\\
&\leq
(1+\varepsilon)\frac{\beta}{q}\chi(\beta)^{2}J_{0,x},
\end{align*}
for $\vert x\vert$ big enough. We used the translational invariance in the second inequality. This finishes the proof of Lemma~\ref{lemma:right_asymptotics}.
\end{proof}
\subsection{Lower bound}
We will use the same argument as in \cite{NS}. In this part, we don't use~\ref{hyp:5}. Set $\delta\in (0,1/2], \Delta_{1}=\Lambda_{\vert x\vert^{\delta}}(0), \Delta_{2}=\Lambda_{\vert x\vert^{\delta}}(x)$ and $\Delta=\Delta_{1}\cup\Delta_{2}$. As we work on $\mathbb{Z}^{d}$, we can take $\delta = 1/2$, but a smaller value may be needed to extend the proof to a different graph and to more general coupling constants. Let $N$ be the number of open edges from $\Delta_{1}$ to $\Delta_{2}$. Then the inclusion of events and the monotonicity of the measure~\eqref{eq:monotonicity} give
\begin{center}
$\mu_{\Delta}(0\leftrightarrow x, N=1)\leq\mu_{\Delta}(0\leftrightarrow x) \leq\mu(0\leftrightarrow x)$.
\end{center}
For $y\in\Delta_{1}, z\in\Delta_{2}$, let $G_{y,z}$ be the event that there exists an unique edge $\lbrace y,z\rbrace$ such that 
\begin{itemize}[noitemsep]
\item $0\overset{\Delta_{1}}{\leftrightarrow}y$,
\item $\lbrace y,z\rbrace$ is open,
\item $z\overset{\Delta_{2}}{\leftrightarrow}x$.
\end{itemize}
In this case, $0$ is connected to $x$ and $N=1$. Therefore
\begin{equation*}
\sum\limits_{y\in\Delta_{1}}\sum\limits_{z\in\Delta_{2}}\mu_{\Delta}(0\overset{\Delta_{1}}{\leftrightarrow}y, \omega_{y,z}=1, z\overset{\Delta_{2}}{\leftrightarrow}x, N=1)
=
\mu_{\Delta}(\bigsqcup\limits_{\substack{y\in\Delta_{1} \\  z\in\Delta_{2}}}G_{y,z})\leq\mu_{\Delta}(0\leftrightarrow x, N=1).
\end{equation*}
In the first equality, we used the fact that the events $G_{y,z}$ are disjoint for different edges. Using the fact that on the event $N=1$,  the number of connected components increases by 1 when $\omega_{y,z}$ goes from 1 to 0, we get that
\begin{equation*}
\mu_{\Delta}(0\overset{\Delta_{1}}{\leftrightarrow}y, \omega_{y,z}=1, z\overset{\Delta_{2}}{\leftrightarrow}x, N=1)
=
\frac{1}{q}(1-\exp(-\beta J_{y,z}))
\mu_{\Delta}(0\overset{\Delta_{1}}{\leftrightarrow}y, z\overset{\Delta_{2}}{\leftrightarrow}x, N=0).
\end{equation*}
Finally, on $N=0$, all the edges between $\Delta_{1}$ and $\Delta_{2}$ are closed, and therefore we can factorize the measure as
\begin{center}
$\mu_{\Delta}(0\overset{\Delta_{1}}{\leftrightarrow}y, z\overset{\Delta_{2}}{\leftrightarrow}x\vert N=0)
=
\mu_{\Delta_{1}}(0\leftrightarrow y)\mu_{\Delta_{2}}(z\leftrightarrow x)$.
\end{center}
Combining all the inequalities we get
\begin{equation}\label{eq:lower_bound_intermediate}
\mu(0\leftrightarrow x)\geq
\sum\limits_{y\in\Delta_{1}}\sum\limits_{z\in\Delta_{2}}
\frac{1}{q}(1-\exp(-\beta J_{y,z}))
\mu_{\Delta_{1}}(0\leftrightarrow y)\mu_{\Lambda'}(z\leftrightarrow x)\mu_{\Delta}(N=0).
\end{equation}
Fix $\varepsilon >0$. It follows from~\ref{hyp:4} and the translational invariance that 
\begin{equation*}
J_{y,z}\geq (1-\varepsilon)J_{0,x},
\end{equation*}
since $\vert z-y-x\vert\leq\delta\vert x\vert$ for $\vert x\vert$ big enough. Therefore, using~\eqref{eq:lower_bound_intermediate}, we get 
\begin{equation}\label{eq:lower_bound_intemediate_2}
\mu(0\leftrightarrow x) \geq
(1-\exp(-\beta(1-\varepsilon)J_{0,x}))\sum\limits_{y\in\Delta_{1}}\sum\limits_{z\in\Delta_{2}}
\frac{1}{q}
\mu_{\Delta_{1}}(0\leftrightarrow y)\mu_{\Delta_{2}}(z\leftrightarrow x)\mu_{\Delta}(N=0).
\end{equation}
By the translational invariance and the monotonicity~\eqref{eq:monotonicity}, we get 
\begin{equation*}
\lim\limits_{\vert x\vert \rightarrow\infty}\sum\limits_{y\in\Delta_{1}}\sum\limits_{z\in\Delta_{2}}\mu_{\Delta_{1}}(0\leftrightarrow y)\mu_{\Delta_{2}}(z\leftrightarrow x)=\chi(\beta)^{2}.
\end{equation*}
Now, let us prove that $\mu_{\Delta}(N=0)$ goes to 1 as $\vert x\vert$ goes to infinity. If $N\geq 1$, then there exist $y,z$ in $\mathbb{Z}^{d}$ such that 
\begin{itemize}[noitemsep]
\item $y\in\Delta_{1},z\in\Delta_{2}$,
\item $\lbrace y,z\rbrace$ is open.
\end{itemize}
Therefore, the union bound gives 
\begin{equation*}
\mu_{\Delta}(N\geq 1) 
\leq
\sum\limits_{y\in\Delta_{1}}\sum\limits_{z\in\Delta_{2}}
\mu_{\Delta}(\omega_{y,z}=1)
\leq
\beta\sum\limits_{y\in\Delta_{1}}\sum\limits_{z\in\Delta_{2}}J_{y,z}\leq (1+\varepsilon)J_{0,x}\vert\Delta_{1}\vert^{2}. 
\end{equation*}
The second inequality follows from the finite energy property and the third inequality from~\ref{hyp:4}. Since $\delta\leq 1/2$ and $\sum_{w\in\mathbb{Z}^{d}}J_{0,w}<\infty$ by~\ref{hyp:3}, it follows that 
\begin{equation}
J_{0,x}\vert\Delta_{1}\vert^{2}=o_{x}(1)
\nonumber
\end{equation}
and therefore $\lim_{\vert x\vert\rightarrow\infty}\mu_{\Delta}(N=0)=1$. The lower bound then follows from~\eqref{eq:lower_bound_intemediate_2} combined with the fact that 
\begin{center}
$\lim\limits_{\vert x\vert \rightarrow \infty}\dfrac{1-\exp(-\beta J_{0,x})}{\beta J_{0,x}}=1$.
\end{center}


\ACKNO{The author would like to warmly thank Hugo Duminil-Copin for his guidance and help through the master thesis as well as reading and pointing out mistakes in the previous versions of the present article. The author would also like to thank Yvan Velenik, Maëllie Godard and two anonymous referees for many helpful comments. Finally, the author would like to thank the Excellence Fellowship program at the University of Geneva for supporting him during his studies.}


\end{document}